\documentclass[pdf,preprint,12pt]{imsart}
\usepackage[margin=1in]{geometry}
\RequirePackage{natbib}
\RequirePackage[OT1]{fontenc}
\RequirePackage{amsthm,amsmath}
\RequirePackage{natbib}
\RequirePackage{hyperref}
\usepackage{bbm}
\usepackage{amsmath,amssymb, amsthm}
\usepackage{amsfonts}
\numberwithin{equation}{section}
\theoremstyle{plain}
\newtheorem{thm}{Theorem}[section]
\newtheorem{corollary}{Corollary}[section]

\newtheorem{lemma}{Lemma}[section]
\newtheorem{proposition}{Proposition}[section]
\newtheorem{definition}{Definition}%[section]
\newtheorem*{definition*}{Definition}%[section]
\newtheorem{remark}{Remark}[section]
\newcommand{\m}[1]{\boldsymbol{#1}}

\newcommand{\Z}{\mathcal{Z}}

\newcommand{\E}{\mathbb{E}}
\newcommand{\PP}{\mathbb{P}}
\newcommand{\II}{\mathbbm{1}}
\newcommand{\ol}{\overline}

\newcommand{\field}[1]{\mathbb{#1}}
\newcommand{\R}{\field{R}}

\newcommand{\HH}{\field{H}}
\newcommand{\MM}{\field{M}}

\newcommand{\Propp}{\textbf{P}}

\DeclareMathOperator{\Tr}{tr}
\DeclareMathOperator{\Tra}{\ol{tr}}

\begin{document}

\begin{frontmatter}
\title{A Note on Matrix Concentration Inequalities via the Method of Exchangeable Pairs}
\runtitle{Matrix concentration inequalities}

\begin{aug}
\author{\fnms{Daniel} \snm{Paulin} \ead[label=e1]{paulindani@gmail.com}} 
\runauthor{D. Paulin}

\affiliation{National University of Singapore}

\address{%Daniel Paulin\\
Department of Mathematics, National University of Singapore\\
10 Lower Kent Ridge Road, Singapore 119076, Republic of Singapore.\\
\printead{e1}}

\end{aug}

\begin{keyword}[class=AMS]
\kwd[Primary ]{60E15}
\kwd{60B20}
\kwd[; secondary ]{15A42}
\kwd{15A45}
\end{keyword}

\begin{keyword}
\kwd{Concentration inequality}
\kwd{random matrices}
\kwd{exchangeable pairs}
\kwd{Stein's method}
\kwd{trace inequality}
\end{keyword}

\begin{abstract}
The aim of this paper is to prove an improved version of the bounded differences inequality for matrix valued functions (see \cite{Troppuf}, Corollary 7.5), by developing the methods of \cite{Mackey}. Along the way, we prove new trace inequalities for the matrix exponential.
\end{abstract}

\end{frontmatter}

\section{Introduction}
Let $Z_1,\ldots,Z_n$ be independent (or dependent) random variables, and $\m{X}=f(Z_1,\ldots,Z_n)$ be a random Hermitian matrix. One specific example is when $\m{X}=\sum_{k}\m{X}_k$ is a sum of random matrices.  In many situations, we are interested in bounding the quantity $\PP(\lambda_{max}(\m{X})\ge t)$.

\cite{ahlswede2002strong} was the first to use Laplace transform method in this setting, they show that for any random Hermitian matrix $\m{X}$,
\begin{equation}
\PP(\lambda_{max}(X)\ge t)\le \inf_{\theta>0}\left\{e^{-\theta t}\E \Tr \exp(\theta \m{X})\right\},
\end{equation}
thus for $\m{X}=\sum_{k} \m{X}_k$,
\begin{equation}
\PP(\lambda_{max}(X)\ge t)\le \inf_{\theta>0}\left\{e^{-\theta t}\E \Tr \exp\left(\theta \sum_{k} \m{X}_k\right)\right\}.
\end{equation}
Estimating the right hand side now poses a difficulty, because in general, $e^{A+B}\neq e^{A}\cdot e^{B}$ for the matrix exponential.

\cite{Troppuf} proves the following lemma to estimate the right hand side:
\begin{lemma}[Lemma 3.4 of \cite{Troppuf}]
Consider a finite sequence $\{\m{X}_k\}$ of independent, random, self-adjoint matrices. Then
\[\E \Tr \exp\left(\sum_k \theta \m{X}_k \right)\le \Tr \exp\left(\sum_k \log \E e^{\theta \m{X}_k} \right) \text{ for } \theta\in \R.\]
\end{lemma}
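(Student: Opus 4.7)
The plan is to reduce the proof to an inductive application of Lieb's concavity theorem, which asserts that for any fixed self-adjoint matrix $\m{H}$, the map $\m{A}\mapsto \Tr\exp(\m{H}+\log \m{A})$ is concave on the cone of positive-definite matrices. Since for any self-adjoint $\m{Y}$ the matrix exponential $e^{\m{Y}}$ is strictly positive-definite, this sets up a clean Jensen-type inequality: expectation can be pulled inside the logarithm at the cost of replacing $e^{\m{Y}}$ by $\E e^{\m{Y}}$.

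Concretely, I would peel off one summand at a time. Condition on $\m{X}_1,\ldots,\m{X}_{n-1}$, set $\m{H}=\sum_{k=1}^{n-1}\theta\m{X}_k$, and use $\theta\m{X}_n=\log e^{\theta\m{X}_n}$ together with the independence of $\m{X}_n$ from the rest to obtain, via Lieb plus Jensen,
\[
\E\Tr\exp\!\Bigl(\sum_{k=1}^{n}\theta\m{X}_k\Bigr) = \E\,\E\bigl[\Tr\exp(\m{H}+\log e^{\theta\m{X}_n})\,\big|\,\m{X}_1,\ldots,\m{X}_{n-1}\bigr] \le \E\Tr\exp\!\Bigl(\m{H}+\log \E e^{\theta\m{X}_n}\Bigr).
\]
Now the deterministic matrix $\log \E e^{\theta\m{X}_n}$ can be absorbed into a new $\m{H}'$, and I repeat the same step on $\m{X}_{n-1}$: write $\theta\m{X}_{n-1}=\log e^{\theta\m{X}_{n-1}}$, condition on the remaining variables, and apply Lieb once more. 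After $n$ iterations every random summand has been replaced by $\log \E e^{\theta\m{X}_k}$, yielding the claim. The case $\theta\in\R$ rather than $\theta>0$ requires no change, because $\theta\m{X}_k$ is still self-adjoint and $e^{\theta\m{X}_k}$ still strictly positive-definite.

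The main obstacle is Lieb's concavity theorem itself, which is a nontrivial fact about operator-convex functions; here it is used as a black box and not reproved. A secondary technical point is to justify interchanging the trace with the expectation and to confirm measurability of the matrix-valued random variables involved, but these are routine given that $\Tr\exp$ is continuous on the space of self-adjoint matrices. Once Lieb is granted, the inductive mechanism above is essentially forced.
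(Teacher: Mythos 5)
Your proposal is correct and follows exactly the route the paper indicates: the lemma is obtained by iterating Corollary 3.3 (the Lieb-plus-Jensen step $\E\Tr\exp(\m{H}+\m{X})\le\Tr\exp(\m{H}+\log\E e^{\m{X}})$), peeling off one independent summand at a time by conditioning. No substantive differences to report.
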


This Lemma is based on a corollary of \cite{Troppuf} (which is derived from a Theorem of Lieb):
\begin{corollary}[Corollary 3.3 of \cite{Troppuf}] Let $\m{H}$ be a fixed self-adjoint matrix, and let $\m{X}$ be a random self-adjoint matrix. Then
\[\E \Tr \exp (\m{H}+\m{X})\le \Tr \exp\left(\m{H}+\log\left(\E e^{\m{X}}\right)\right).\]
\end{corollary}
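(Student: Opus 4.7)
The plan is to derive this directly from Lieb's concavity theorem, which asserts that for any fixed self-adjoint matrix $\m{H}$, the map
\[\m{A}\;\longmapsto\;\Tr\exp\bigl(\m{H}+\log \m{A}\bigr)\]
is concave on the cone of positive-definite matrices. I would cite this as the sole nontrivial input.

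Given Lieb's theorem, I would set $\m{A}=e^{\m{X}}$, which is a random positive-definite matrix since $\m{X}$ is self-adjoint. Then $\log \m{A}=\m{X}$ by the functional calculus, so trivially
\[\Tr\exp(\m{H}+\m{X})=\Tr\exp\bigl(\m{H}+\log e^{\m{X}}\bigr).\]
Taking expectations and invoking Jensen's inequality for the real-valued concave function $\m{A}\mapsto \Tr\exp(\m{H}+\log \m{A})$ on the positive-definite cone, I would obtain
\[\E\,\Tr\exp(\m{H}+\m{X})=\E\,\Tr\exp\bigl(\m{H}+\log e^{\m{X}}\bigr)\le \Tr\exp\bigl(\m{H}+\log \E e^{\m{X}}\bigr),\]
which is the stated inequality.

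The only step requiring care is the Jensen's inequality application: one needs $\E e^{\m{X}}$ to be well defined and positive-definite (strictly), and one needs the concave function to extend in an upper semicontinuous way to the boundary if $e^{\m{X}}$ is only positive-semidefinite. These are standard and can be handled by a routine approximation $\m{X}\mapsto \m{X}+\varepsilon \m{I}$ with $\varepsilon\downarrow 0$, together with integrability hypotheses that are implicit in the statement. Apart from this mild regularity check, the argument is a one-line consequence of Lieb's theorem, and I expect no substantive obstacle. The bulk of the intellectual content is contained in Lieb's concavity theorem itself, which I would treat as a black box.
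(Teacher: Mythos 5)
Your proposal is correct and is essentially the proof the paper points to: the corollary is quoted from Tropp, who derives it exactly this way by applying Jensen's inequality to Lieb's concavity theorem for $\m{A}\mapsto \Tr\exp(\m{H}+\log\m{A})$ with $\m{A}=e^{\m{X}}$. The paper itself gives no independent proof, so there is nothing further to compare.
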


These inequalities are used in \cite{Troppuf} to prove matrix versions of various concentration inequalities for sums of random matrices (Chernoff, Bernstein), and inequalities for matrix martingales (Azuma-Hoeffding, and matrix bounded differences).

\cite{Mackey} takes a different approach. They make the following basic definition(\cite{Mackey}, Definition 2.2):
\begin{definition}[Matrix Stein Pair]
Let $(Z,Z')$ be an exchangeable pair of random variables taking values in a polish space $\Z$, and let $\m{\Psi}:\Z \to \HH^d$ be a measurable function. Define the random Hermitian matrices 
\[\m{X}:=\Psi(Z) \hspace{5mm} \text{and} \hspace{5mm} \m{X}':=\Psi(Z').\]
We say that $(\m{X},\m{X}')$ is a \emph{matrix Stein pair} if there is a constant $\alpha\in (0,1]$ for which
\begin{equation}
\E\left(\m{X}-\m{X}'|Z\right)=\alpha \m{X} \text{ almost surely.}
\end{equation}
The constant $\alpha$ is called the \emph{scale factor} of the pair. When discussing a matrix Stein pair $(\m{X},\m{X}')$, we always assume that $\E ||\m{X}||^2<\infty$.
\end{definition}

Suppose that $(\m{X},\m{X}')$ is a matrix Stein pair, then they write the derivate moment generating function of $m(X)$ as
\begin{eqnarray*}
m(\theta)'&=&\E\Tr\left(\m{X} e^{\theta \m{X}}\right)=\E \Tr \left(\frac{1}{\alpha}(\m{X}-\m{X}') e^{\theta \m{X}}\right)\\&=&\E\Tr \left(\frac{1}{2\alpha}(\m{X}-\m{X}') \left(e^{\theta \m{X}}-e^{\theta \m{X}'}\right)\right),
\end{eqnarray*}
using exchangeability in the last step.

To further bound this quantity, they prove the following trace inequality (Lemma 3.4 of \cite{Mackey}):
\begin{lemma}
Let $I$ be an interval of the real line. Suppose that $g: I\to \R$ is a weakly increasing function and that $h: I \to \R$ is a function whose derivative $h'$ is convex. For all matrices 
$\m{A},\m{B}\in \HH^{d}(I)$,
\begin{eqnarray*}
&&\Tr[(g(\m{A})-g(\m{B}))\cdot (h(\m{A})-h(\m{B})]\\
&&\le\frac{1}{2}\Tr\left[(g(\m{A})-g(\m{B}))(\m{A}-\m{B})\cdot (h'(\m{A})+h'(\m{B})) \right].
\end{eqnarray*}
When $h'$ is concave, the inequality is reversed. The same results hold for the standard trace.
\end{lemma}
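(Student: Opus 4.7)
The plan is to reduce the matrix inequality to a pointwise scalar inequality via spectral calculus and then recognize that scalar inequality as the trapezoidal rule for the convex function $h'$. I write $\m{A}=\sum_i \lambda_i u_i u_i^*$ and $\m{B}=\sum_j \mu_j v_j v_j^*$ in spectral form with $\lambda_i,\mu_j\in I$, and set $p_{ij}:=|\langle u_i,v_j\rangle|^2$. The $p_{ij}$ are nonnegative and satisfy $\sum_i p_{ij}=\sum_j p_{ij}=1$. For any scalar functions $\phi,\psi$, a direct check gives $\Tr[\phi(\m{A})\psi(\m{B})] = \sum_{i,j} p_{ij}\phi(\lambda_i)\psi(\mu_j)$. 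Applying this to the four-term expansion of the left-hand side of the lemma yields
\[
\Tr\bigl[(g(\m{A})-g(\m{B}))(h(\m{A})-h(\m{B}))\bigr] = \sum_{i,j} p_{ij}(g(\lambda_i)-g(\mu_j))(h(\lambda_i)-h(\mu_j)).
\]

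Next, I expand the triple product on the right-hand side into eight terms. Since $g(\m{A})$, $\m{A}$, and $h'(\m{A})$ mutually commute (they are functions of the same matrix), and similarly for $\m{B}$, the cyclic property of the trace lets me cast every one of the eight resulting traces into the form $\Tr[\phi(\m{A})\psi(\m{B})]$ above. Collecting the sum and grouping by the factor $h'(\lambda_i)+h'(\mu_j)$, the right-hand side reduces to
\[
\tfrac{1}{2}\sum_{i,j} p_{ij}(g(\lambda_i)-g(\mu_j))(\lambda_i-\mu_j)(h'(\lambda_i)+h'(\mu_j)).
\]

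Because every $p_{ij}\ge 0$, the matrix inequality now follows from the pointwise scalar claim that for all $a,b\in I$,
\[
(g(a)-g(b))(h(a)-h(b)) \le \tfrac{1}{2}(g(a)-g(b))(a-b)(h'(a)+h'(b)).
\]
If $g(a)=g(b)$ both sides vanish, so assume otherwise; the monotonicity of $g$ forces $g(a)-g(b)$ and $a-b$ to share the same strict sign. Dividing by $g(a)-g(b)$ and, by the symmetry in $(a,b)$, taking $a>b$, it remains to show $h(a)-h(b)\le \tfrac{1}{2}(a-b)(h'(a)+h'(b))$. This is the trapezoidal-rule estimate: convexity of $h'$ makes the chord joining $(b,h'(b))$ and $(a,h'(a))$ dominate $h'$ on $[b,a]$, so $\int_b^a h'(t)\,dt\le \tfrac{1}{2}(a-b)(h'(a)+h'(b))$. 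The concave case reverses the chord inequality, and the statement for the standard (unnormalized) trace follows from the identical spectral computation verbatim.

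The step I expect to require the most care is the expansion in the second paragraph: verifying that after collecting the eight traces, the sum factorizes cleanly as $(g(\lambda_i)-g(\mu_j))(\lambda_i-\mu_j)(h'(\lambda_i)+h'(\mu_j))$. The computation is straightforward once the terms are grouped by the coefficients of $h'(\lambda_i)$ and of $h'(\mu_j)$ separately, but the sign bookkeeping and cyclic rearrangements must be handled carefully. Once this factorization is in hand, the remainder of the argument is essentially a two-line scalar calculus exercise.
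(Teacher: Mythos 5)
Your proof is correct. The paper does not actually prove this lemma---it imports it as Lemma 3.4 of \cite{Mackey}, remarking only that it rests on a standard trace inequality (\cite{Petzsurvey}, Proposition 3, the generalized Klein inequality)---and your argument is exactly that standard route: the spectral double-sum identity $\Tr[\phi(\m{A})\psi(\m{B})]=\sum_{i,j}p_{ij}\,\phi(\lambda_i)\psi(\mu_j)$ (together with cyclicity to collapse each of the eight triple products into this two-factor form) reduces the claim to the scalar trapezoid-rule inequality for the convex derivative $h'$, which is precisely how the cited sources establish it.
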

This lemma is based on a standard trace inequality (\cite{Petzsurvey}, Proposition 3).
\begin{corollary}\label{cortrexp}
For $\theta>0$,
\[\Tr \left((\m{X}-\m{X}') \left(e^{\theta \m{X}}-e^{\theta \m{X}'}\right)\right)\le \frac{\theta}{2}
\Tr \left((\m{X}-\m{X}')^2 \left(e^{\theta \m{X}}+e^{\theta \m{X}'}\right)\right)
\]
\end{corollary}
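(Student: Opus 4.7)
The plan is to derive Corollary \ref{cortrexp} as a direct instance of Lemma 3.4 of \cite{Mackey} (the one stated immediately above the corollary), by choosing the functions $g$ and $h$ so that the left-hand and right-hand sides of that lemma specialize to the desired expressions.

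First I would pick $g(x)=x$, which is (weakly) increasing on all of $\R$, and $h(x)=e^{\theta x}$, so that $g(\m{A})-g(\m{B})=\m{A}-\m{B}$ and $h(\m{A})-h(\m{B})=e^{\theta\m{A}}-e^{\theta\m{B}}$. For this choice, $h'(x)=\theta e^{\theta x}$ with $(h')''(x)=\theta^{3}e^{\theta x}\ge 0$ when $\theta>0$, so $h'$ is convex on $\R$, verifying the hypothesis of the lemma. Since $\m{X},\m{X}'$ are self-adjoint, their spectra lie inside $\R$, so choosing $I=\R$ the matrices $\m{X},\m{X}'$ are in $\HH^d(I)$ and the lemma applies with $\m{A}=\m{X}$, $\m{B}=\m{X}'$.

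Next I would read off the conclusion of the lemma: the left-hand side becomes exactly $\Tr[(\m{X}-\m{X}')(e^{\theta\m{X}}-e^{\theta\m{X}'})]$, and the right-hand side becomes
\[
\tfrac12 \Tr\!\left[(\m{X}-\m{X}')(\m{X}-\m{X}')\cdot(\theta e^{\theta\m{X}}+\theta e^{\theta\m{X}'})\right]
=\tfrac{\theta}{2}\,\Tr\!\left[(\m{X}-\m{X}')^{2}(e^{\theta\m{X}}+e^{\theta\m{X}'})\right],
\]
after pulling the scalar $\theta$ out of the trace. This is precisely the stated inequality.

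There is no real obstacle here, since the corollary is simply the specialization of an inequality that has already been established; the only thing worth double-checking is the sign/convexity hypothesis on $h'$, which is what pins down the direction of the inequality and the requirement $\theta>0$ (for $\theta<0$ one would have $h'$ concave and the inequality would reverse, consistent with the last sentence of the lemma).
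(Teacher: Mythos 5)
Your proposal is correct and matches the paper's intended derivation: the corollary is presented as a direct specialization of Lemma 3.4 of \cite{Mackey} with $g(x)=x$ and $h(x)=e^{\theta x}$, exactly as you do, with the convexity of $h'(x)=\theta e^{\theta x}$ for $\theta>0$ pinning down the direction of the inequality. Nothing further is needed.
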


Using this corollary, we can bound the derivate of the trace mgf:
\begin{eqnarray*}
m(\theta)'&\le&  \E\Tr \left(\frac{\theta}{2\alpha}(\m{X}-\m{X}')^2 \left(e^{\theta \m{X}}+e^{\theta \m{X}'}\right)\right)\\
&=& \E\Tr \left(\frac{\theta}{\alpha}(\m{X}-\m{X}')^2 e^{\theta \m{X}}\right),
\end{eqnarray*}
and this quantity can be bounded in many situations.

The advantage of this approach compared to \cite{Troppuf} is that the constants are often better, and some dependent cases can be also treated. The disadvantage is that other than sums of random matrices, few other cases can be written as Stein pairs. This means that matrix martingales, and the method of bounded differences, are not possible to recover.

The purpose of this paper is to show that \cite{Mackey} can be improved to show the method of bounded differences for matrix valued functions. We are going to prove new trace inequalities, which generalize Corollary \ref{cortrexp}, and allow us to go beyond Stein pairs.

Our inequality also works for weakly dependent random variables. We quantify the dependence by a matrix:
\begin{definition*}[Dobrushin's interdependence matrix]
Let $X:=(X_1,\ldots,X_n)$ be a random vector taking values in $\Lambda:=(\Lambda_1,\ldots,\Lambda_n)$, with law $\mu$. Suppose that $D:=(d_{ij})_{1\le i,j\le n}$ is an $n\times n$ matrix with nonnegative entries and zeroes on the diagonal such that for any $i$, and any $x,y\in \Lambda$, 
\[d_{TV}(\mu_i(\cdot | x_{-i}),\mu_i(\cdot | y_{-i}))\le \sum_{j=1}^n d_{ij}\II[x_j\ne y_j]\]
where $d_{TV}$ is the total variational distance. 
Then we say that $D$ is a Dobrushin interdependence matrix for the random vector $X$ (or equivalently random measure $\mu$). Here $x_{-i}:=(x_1,\ldots,x_{i-1},x_{i+1},\ldots,x_n)$ and $\mu_i(\cdot | x_{-i})$ is the conditional distribution of $X_i$ given $X_{-i}=x_{-i}$.
\end{definition*}

Concentration inequalities for real valued functions Hamming Lipschitz functions under the condition $||D||_2<1$ have been proven in \cite{Cth}, Chapter 4.

\section{Results}

The following result is a strengthening of Corollary 7.5 of \cite{Troppuf}. We have exponent $-t^2/\sigma^2$ instead of $-t^2/8\sigma^2$ in the independent case, and our result also works under Dobrushin-type weak dependence.

\begin{thm}\label{thmboundediff}
Let $\{Z_k:k=1,\ldots,n\}$ be an independent family of random variables, and let $\m{H}$ be a function that maps $n$ variables to a self adjoint matrix of dimension $d$.
Consider a sequence $\{\m{A}_k\}$ of fixed self-adjoint matrices that satisfy
\begin{equation}\label{Hammingcond}
\left(\m{H}(z_1,\ldots,z_k,\ldots,z_n)-\m{H}(z_1,\ldots,z_k',\ldots,z_n)\right)^2\le \m{A}_k^2,
\end{equation}
where $z_i$ and $z_i'$ range over all possible values of $Z_i$ for each index $i$. Compute the variance parameter
\begin{equation}
\sigma^2:=\left|\left |\sum_k \m{A}_k^2\right|\right|.
\end{equation}
Then for all $t\ge 0$,
\begin{equation}
\PP\left\{\lambda_{max}\left(\m{H}(Z)-\E \m{H}(Z)\right)\ge t\right\}\le d\cdot e^{-t^2/\sigma^2}
\end{equation}
where $Z=(Z_1,\ldots,Z_n)$.

Alternatively, suppose that $\{Z_k:k=1,\ldots,n\}$ is a family of dependent random variables with Dobrushin interdependence matrix $D$. If $D$ satisfies $\max(||D||_1,||D||_{\infty})<1$, then for every $t\ge 0$,
\begin{equation}
\PP\left\{\lambda_{max}\left(\m{H}(Z)-\E \m{H}(Z)\right)\ge t\right\}\le d\cdot e^{-t^2/(c\sigma^2)},
\end{equation}
with 
\begin{equation}\label{Ceq}
c:=\frac{1/(1-||D||_1)+1/(1-||D||_{\infty})}{2}.
\end{equation}
\end{thm}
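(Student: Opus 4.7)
My plan is to bound the trace moment generating function
\[ m(\theta) \;:=\; \E \Tr \exp\bigl(\theta\,(\m{H}(Z) - \E \m{H}(Z))\bigr) \]
by $d\cdot \exp(\theta^{2}\sigma^{2}/4)$ for $\theta \geq 0$ and then apply the standard matrix Laplace transform bound $\PP(\lambda_{\max}(\m{H}(Z) - \E\m{H}(Z)) \geq t) \leq \inf_{\theta>0} e^{-\theta t} m(\theta)$; optimizing at $\theta = 2t/\sigma^{2}$ will yield the desired $d\cdot e^{-t^{2}/\sigma^{2}}$. The MGF bound will be obtained by proving the differential inequality $m'(\theta) \leq (\theta \sigma^{2}/2)\,m(\theta)$ for $\theta \geq 0$, integrating, and using $m(0)=d$.

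Set $\m{X} := \m{H}(Z) - \E \m{H}(Z)$, so that $m'(\theta) = \E\Tr(\m{X} e^{\theta \m{X}})$. For each coordinate $k$, introduce an independent copy $Z_{k}'$ of $Z_{k}$ and let $\m{X}^{(k)}$ denote the matrix obtained from $\m{X}$ by replacing $Z_{k}$ with $Z_{k}'$; the pair $(\m{X},\m{X}^{(k)})$ is exchangeable conditional on the other coordinates, and by hypothesis \eqref{Hammingcond} we have $(\m{X} - \m{X}^{(k)})^{2} \preceq \m{A}_{k}^{2}$. Decompose $\m{X}$ as a Doob martingale $\m{X} = \sum_{k} D_{k}$ with $D_{k} := \E(\m{H}(Z)\mid Z_{1},\ldots,Z_{k}) - \E(\m{H}(Z)\mid Z_{1},\ldots,Z_{k-1})$, so that $m'(\theta) = \sum_{k} \E\Tr(D_{k} e^{\theta \m{X}})$. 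Writing $\tilde D_{k}$ for the version of $D_{k}$ in which $Z_{k}$ is replaced by $Z_{k}'$, one checks that $\E[\tilde D_{k}\mid Z_{1},\ldots,Z_{k}] = 0$, and the swap $Z_{k} \leftrightarrow Z_{k}'$ yields the symmetrized identity
\[ \E\Tr(D_{k}\, e^{\theta \m{X}}) \;=\; \tfrac{1}{2}\,\E\Tr\bigl((D_{k}-\tilde D_{k})(e^{\theta \m{X}} - e^{\theta \m{X}^{(k)}})\bigr). \]
The core of the proof is a new trace inequality — a generalization of Corollary \ref{cortrexp} that applies beyond Stein pairs — which bounds the above quantity by $\tfrac{\theta}{2}\E\Tr(\m{A}_{k}^{2}\, e^{\theta \m{X}})$, using the operator Jensen relation $D_{k} - \tilde D_{k} = \E[\m{X} - \m{X}^{(k)} \mid Z_{1},\ldots,Z_{k},Z_{k}']$ together with the bound $(\m{X}-\m{X}^{(k)})^{2} \preceq \m{A}_{k}^{2}$. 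Summing in $k$ and invoking $\|\sum_{k} \m{A}_{k}^{2}\| \leq \sigma^{2}$ produces the target differential inequality.

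For the Dobrushin case, I would replace coordinatewise resampling with a Glauber-type exchangeable pair whose reversible law is the joint law of $Z$. Under $\max(\|D\|_{1},\|D\|_{\infty})<1$, a coupling argument in the spirit of Dobrushin's uniqueness theorem (cf.\ the scalar treatment in \cite{Cth}, Chapter~4) controls how a single-coordinate perturbation propagates through the remaining coordinates, with the contraction governed by $(I-D)^{-1}$; this inflates the variance proxy by exactly the factor $c$ from \eqref{Ceq}. The remaining scaffolding (Doob decomposition adapted to the dependent filtration, exchange identity, new trace inequality, differential inequality) then produces the bound with $\sigma^{2}$ replaced by $c\sigma^{2}$.

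The principal obstacle is the new trace inequality itself. Corollary \ref{cortrexp} is tailored to the pair $(\m{X},\m{X}^{(k)})$, whereas the natural quantity appearing in $m'(\theta)$ is the Doob difference $D_{k}$, and $D_{k}-\tilde D_{k}$ is only a conditional projection of $\m{X}-\m{X}^{(k)}$. Pushing the conditional expectation through the matrix exponential $e^{\theta \m{X}} - e^{\theta \m{X}^{(k)}}$ requires a careful operator Jensen step combined with sign control of the cross-terms that arise — precisely the technical step that was unavailable in \cite{Mackey} and that blocks a Stein-pair treatment of bounded differences. Once this inequality is in hand, the remainder of the proof reduces to the standard Stein-pair-style integration of the differential inequality and the Laplace transform bound.
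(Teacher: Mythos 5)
Your treatment of the independent case is correct, but it takes a genuinely different route from the paper. You decompose $\m{X}=\m{H}(Z)-\E\m{H}(Z)$ as a Doob martingale $\sum_k D_k$, kill $\tilde D_k$ inside the trace using $\E[\tilde D_k\mid Z]=0$, symmetrize via the single swap $Z_k\leftrightarrow Z_k'$, and control $(D_k-\tilde D_k)^2\preceq \m{A}_k^2$ by Kadison/operator--Jensen applied to $D_k-\tilde D_k=\E[\m{X}-\m{X}^{(k)}\mid Z_1,\ldots,Z_k,Z_k']$. The paper instead builds the full Chatterjee apparatus: the Glauber exchangeable pair, the antisymmetric function $F(x,y)=\sum_{k\ge0}(P^kf(x)-P^kf(y))$, and a coupon-collector computation over the coupled chains $X(k),X'(k)$. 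Both arguments hinge on exactly the same new ingredient, namely Theorem \ref{thmABC} (equivalently Corollary \ref{corABC}) applied with a matrix $C$ that is \emph{not} equal to $A-B$ --- this is the generalization of Corollary \ref{cortrexp} you correctly identify as the crux, and since it is stated and proved elsewhere in the paper you may invoke it. Your constants check out: $m'(\theta)\le\tfrac{\theta}{2}\sigma^2 m(\theta)$ gives $\log(m(\theta)/m(0))\le\theta^2\sigma^2/4$ and hence $d\,e^{-t^2/\sigma^2}$. For the independent case your route is arguably cleaner, since it avoids the infinite Markov chain and the convergence condition \eqref{sumcond}; what it loses is precisely the flexibility needed for dependence.

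The Dobrushin half of your proposal has a genuine gap. Under dependence the identity $D_k-\tilde D_k=\E[\m{X}-\m{X}^{(k)}\mid Z_1,\ldots,Z_k,Z_k']$ breaks down: replacing $Z_k$ changes the conditional law of $Z_{k+1},\ldots,Z_n$, so the Doob difference $D_k$ aggregates perturbations of all later coordinates and is no longer dominated by $\m{A}_k^2$; "the same scaffolding with the filtration adapted" is not an argument. The paper's proof does real work exactly here: it telescopes $f(X(k))-f(X'(k))$ over coordinates into terms $Z_i(k)$ supported on the event $L_i(k)=\II[X_i(k)\ne X_i'(k)]$, bounds $l(k)=\E(L(k)\mid X,X')$ by $B^k e(I)$ with $B=(1-\tfrac1n)E+\tfrac1n D$ via the greedy (maximal) coupling, and sums the two geometric series $\sum_k\|B\|_1^k$ and $\sum_k\|B\|_\infty^k$ --- that is where the specific constant $c=\tfrac12\bigl(1/(1-\|D\|_1)+1/(1-\|D\|_\infty)\bigr)$ comes from. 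Your appeal to "contraction governed by $(I-D)^{-1}$" points in the right direction but supplies neither the coupling construction nor the derivation of $c$, so this part should be counted as unproved.
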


A simple corollary of this the following matrix Hoeffding bound (in the independent case, similar to Corollary 4.2 of \cite{Mackey}):

\begin{corollary}\label{corboundediff}
Let $\{\m{Y}_k:k=1,\ldots,n\}$ be an independent family of $\HH^d$ matrices, and let $\m{H}$ be a function that maps $n$ variables to a self adjoint matrix of dimension $d$.
Consider a sequence $\{\m{A}_k\}$ of fixed self-adjoint matrices, 
\begin{equation}\label{Hammingcond2}
\E\m{Y}_k=0, \hspace{5mm} \m{Y}_k^2\preceq \m{A}_k^2,
\end{equation}
Define the variance parameter
\[\sigma^2:=\left|\left |\sum_k \m{A}_k^2\right|\right|.\]
Then for all $t\ge 0$,
\[\PP\left\{\lambda_{max}\left(\sum_k \m{Y}_k\right)\ge t\right\}\le d\cdot e^{-t^2/(4\sigma^2)}.\]
Alternatively, for $\{\m{Y}_k:k=1,\ldots,n\}$ weakly dependent with Dobrushin matrix $D$ satisfying $\max(||D||_1,||D||_{\infty})<1$, we have
\[\PP\left\{\lambda_{max}\left(\sum_k \m{Y}_k\right)\ge t\right\}\le d\cdot e^{-t^2/(4c \sigma^2)},\]
with $c$ defined as in \eqref{Ceq}.
\end{corollary}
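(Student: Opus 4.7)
The plan is to deduce the corollary as a direct specialization of Theorem \ref{thmboundediff}, with the matrices $\m{Y}_k$ themselves playing the role of the independent (or weakly dependent) random variables $Z_k$, and with $\m{H}(z_1,\ldots,z_n) := \sum_{k=1}^n z_k$. Under this choice the one-coordinate differences appearing in hypothesis \eqref{Hammingcond} collapse to
\[
\m{H}(z_1,\ldots,z_k,\ldots,z_n) - \m{H}(z_1,\ldots,z_k',\ldots,z_n) \;=\; z_k - z_k',
\]
so the only thing to do is to exhibit fixed self-adjoint matrices $\tilde{\m{A}}_k$ for which $(z_k-z_k')^2 \preceq \tilde{\m{A}}_k^2$ uniformly over the support of $\m{Y}_k$.

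To produce such matrices, I would invoke the elementary Hermitian operator inequality $(A-B)^2 \preceq 2A^2 + 2B^2$, which follows from $(A+B)^2 \succeq 0$ and the identity $(A-B)^2 + (A+B)^2 = 2A^2 + 2B^2$. Combining this with the pointwise hypothesis $\m{Y}_k^2 \preceq \m{A}_k^2$ from \eqref{Hammingcond2} gives $(z_k - z_k')^2 \preceq 2z_k^2 + 2(z_k')^2 \preceq 4\m{A}_k^2 = (2\m{A}_k)^2$, so the natural choice is $\tilde{\m{A}}_k := 2\m{A}_k$. The corresponding variance parameter in the sense of Theorem \ref{thmboundediff} is then $\tilde{\sigma}^2 = \left\|\sum_k (2\m{A}_k)^2\right\| = 4\sigma^2$.

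Finally, the centering hypothesis $\E \m{Y}_k = 0$ would let me identify $\lambda_{max}(\m{H}(Z) - \E\m{H}(Z))$ with $\lambda_{max}(\sum_k \m{Y}_k)$, and substituting $\tilde{\sigma}^2 = 4\sigma^2$ into the two conclusions of Theorem \ref{thmboundediff} immediately produces the claimed bounds with exponents $-t^2/(4\sigma^2)$ in the independent case and $-t^2/(4c\sigma^2)$ in the weakly dependent case. I do not foresee any serious obstacle: the argument is a short reduction, and the only substantive ingredient is the Hermitian inequality $(A-B)^2 \preceq 2(A^2+B^2)$, which is precisely what accounts for the factor of $4$ that distinguishes the exponent here from the one in Theorem \ref{thmboundediff}.
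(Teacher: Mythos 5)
Your reduction is correct and is exactly the argument the paper intends: the paper proves the corollary only via the remark that \eqref{Hammingcond} holds with $2\m{A}_k$, which is precisely your step $(z_k-z_k')^2 \preceq 2z_k^2+2(z_k')^2 \preceq (2\m{A}_k)^2$, giving $\tilde{\sigma}^2=4\sigma^2$. Nothing is missing.
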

\begin{remark}
The 4 in the exponent  comes from the fact that \eqref{Hammingcond} is satisfied for $2\m{A}_k$.
\end{remark}

An important tool in the proof is the following trace inequality:

\begin{thm}\label{thmABC}
Let $A,B,C$ be Hermitian matrices of equal size, then
\begin{eqnarray*}
\Tr\left(C(e^A-e^B)\right)\le \Tr\left(\frac{C^2+(A-B)^2}{2}\left(\frac{e^{A}+e^{B}}{2}\right)\right).
\end{eqnarray*} 
\end{thm}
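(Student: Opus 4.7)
The plan is to reduce the matrix inequality to the elementary scalar convexity fact
$$\frac{e^{a}-e^{b}}{a-b} \;\le\; \frac{e^{a}+e^{b}}{2},$$
via a Duhamel-type integral representation. Starting from $\frac{d}{ds}\bigl[e^{sA}e^{(1-s)B}\bigr] = e^{sA}(A-B)e^{(1-s)B}$ and integrating over $s\in[0,1]$, one obtains
$$e^{A}-e^{B} \;=\; \int_{0}^{1} e^{sA}(A-B)e^{(1-s)B}\,ds,$$
so that $\Tr\bigl(C(e^A-e^B)\bigr) = \int_0^1 \Tr\bigl(Ce^{sA}(A-B)e^{(1-s)B}\bigr)\,ds$. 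The argument then proceeds in two stages: bound the integrand by a Cauchy--Schwarz/AM--GM step, then bound the resulting symmetric integrals using the scalar fact above.

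For the first stage, I would symmetrize the exponentials and write $\Tr\bigl(Ce^{sA}(A-B)e^{(1-s)B}\bigr) = \Tr(PQ)$ with $P := e^{(1-s)B/2}Ce^{sA/2}$ and $Q := e^{sA/2}(A-B)e^{(1-s)B/2}$. The trace Cauchy--Schwarz inequality $|\Tr(PQ)|\le \sqrt{\Tr(PP^{*})\Tr(Q^{*}Q)}$, followed by the scalar AM--GM bound $\sqrt{xy}\le (x+y)/2$, and a cyclic rearrangement give, for every $s\in[0,1]$,
$$\Tr\bigl(Ce^{sA}(A-B)e^{(1-s)B}\bigr) \;\le\; \tfrac{1}{2}\Tr\bigl(Ce^{sA}Ce^{(1-s)B}\bigr) + \tfrac{1}{2}\Tr\bigl((A-B)e^{sA}(A-B)e^{(1-s)B}\bigr).$$

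For the second stage, the key claim is that for any Hermitian $X$,
$$\int_{0}^{1}\Tr\bigl(X e^{sA} X e^{(1-s)B}\bigr)\,ds \;\le\; \tfrac{1}{2}\Tr\bigl(X^{2}(e^{A}+e^{B})\bigr).$$
Expanding $A=\sum_i a_i u_i u_i^{*}$ and $B=\sum_j b_j v_j v_j^{*}$ in spectral form, both sides become weighted sums of the nonnegative numbers $|u_i^{*}Xv_j|^{2}$: the left-hand side with coefficient $\int_0^1 e^{sa_i+(1-s)b_j}\,ds = (e^{a_i}-e^{b_j})/(a_i-b_j)$, and the right-hand side with coefficient $(e^{a_i}+e^{b_j})/2$. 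The claim therefore reduces, term by term, to the scalar convexity bound displayed above, which itself is immediate from Jensen's inequality applied to the convex map $x\mapsto e^{x}$. Invoking this claim once with $X=C$ and once with $X=A-B$, and integrating the pointwise estimate of the preceding paragraph in $s$, yields exactly the stated inequality. I expect the main subtlety to be the non-commutativity of $A$ and $B$; this is defused by the symmetric splitting $e^{sA}=e^{sA/2}\cdot e^{sA/2}$ (likewise for $B$) before the Cauchy--Schwarz step and by the mixed-basis expansion, which reduces the remaining matrix inequality to a single scalar inequality.
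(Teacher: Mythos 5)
Your proof is correct, but it takes a genuinely different route from the paper. The paper reduces (after a WLOG shift to make $A,B\succeq 0$) to the monomial inequality $\Tr(C(A^k-B^k))\le k\Tr\bigl(\tfrac{C^2+(A-B)^2}{4}(A^{k-1}+B^{k-1})\bigr)$ via the Taylor series, telescopes $A^k-B^k$ into terms $A^{l-1}(A-B)B^{k-l}$, and controls the resulting quantities $T_l$ by a recursive maximum argument built on the operator inequality $PQ+Q^*P^*\preceq PP^*+Q^*Q$. You instead use the Duhamel representation $e^A-e^B=\int_0^1 e^{sA}(A-B)e^{(1-s)B}\,ds$, a symmetric splitting plus trace Cauchy--Schwarz and AM--GM to separate $C$ from $A-B$ (this step is the trace form of the same inequality $PQ+Q^*P^*\preceq PP^*+Q^*Q$ the paper uses), and then a mixed spectral expansion reducing the remaining estimate to the scalar Hermite--Hadamard bound $(e^{a}-e^{b})/(a-b)\le (e^{a}+e^{b})/2$; this last stage is essentially the spectral argument behind the mean value trace inequality of Mackey et al.\ and Petz that the paper quotes for Stein pairs. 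Your version is shorter, needs no reduction to positive semidefinite $A,B$ and no maximization over indices, and generalizes immediately to any $f$ with convex derivative, whereas the paper's combinatorial telescoping is what also delivers its Theorem \ref{mxholder} for rational powers. One small point of hygiene: for a fixed $s$ the quantity $\Tr\bigl(Ce^{sA}(A-B)e^{(1-s)B}\bigr)$ need not be real, so your pointwise stage-one bound should be stated for its real part (which Cauchy--Schwarz controls, since $\mathrm{Re}\,\Tr(PQ)\le|\Tr(PQ)|$); integrating the real part over $s$ still recovers $\Tr(C(e^A-e^B))$, which is real, so nothing is lost.
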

\begin{corollary}\label{corABC}
Under the same conditions, for $\theta>0$,
\begin{eqnarray*}
\Tr\left(C(e^{\theta A}-e^{\theta B})\right)\le \theta \Tr\left(\frac{C^2+(A-B)^2}{2}\left(\frac{e^{\theta A}+e^{\theta B}}{2}\right)\right),
\end{eqnarray*} 
and for $\theta<0$,
\begin{eqnarray*}
\Tr\left(C(e^{\theta A}-e^{\theta B})\right)\ge \theta \Tr\left(\frac{C^2+(A-B)^2}{2}\left(\frac{e^{\theta A}+e^{\theta B}}{2}\right)\right).
\end{eqnarray*} 
\end{corollary}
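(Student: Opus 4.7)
The plan is to deduce both inequalities directly from Theorem \ref{thmABC} by a simple rescaling, exploiting the fact that the inequality in the theorem is homogeneous of degree two in $(A,B,C)$ on the left and of degree two on the right (ignoring the exponentials), so that a single scalar rescaling of all three matrices lets us absorb the $\theta$'s correctly.

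Concretely, I would apply Theorem \ref{thmABC} to the triple $(\theta A, \theta B, \theta C)$, which is still Hermitian for any real $\theta$. This substitution gives
\[
\Tr\bigl(\theta C\,(e^{\theta A}-e^{\theta B})\bigr)\le \Tr\!\left(\frac{\theta^2 C^2+\theta^2(A-B)^2}{2}\cdot\frac{e^{\theta A}+e^{\theta B}}{2}\right).
\]
Now I would simply pull a factor of $\theta$ out of the left-hand trace and a factor of $\theta^2$ out of the right-hand trace, and then divide both sides by $\theta$. For $\theta>0$ the division preserves the inequality and yields the first claim of Corollary \ref{corABC}; for $\theta<0$ the division reverses the inequality and yields the second claim. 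This uses nothing beyond linearity of the trace and the sign rule for multiplying an inequality by a negative scalar.

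There is essentially no obstacle here: the corollary is a formal consequence of Theorem \ref{thmABC} and the homogeneity of the quadratic expressions in $A$, $B$, and $C$. The only minor points to verify are that $\theta A$, $\theta B$, $\theta C$ remain Hermitian (immediate, as $\theta\in\R$) and that the matrix exponential satisfies $e^{(\theta A)}=e^{\theta A}$ by definition, so the substitution lands in the desired form. No new trace inequality or spectral argument is required beyond what is established in the statement of Theorem \ref{thmABC}.
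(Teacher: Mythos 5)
Your proposal is correct and is exactly the paper's argument: the paper's entire proof reads ``Apply Theorem \ref{thmABC} to $\theta A,\theta B, \theta C$,'' and your write-up just spells out the resulting factor of $\theta$ on the left versus $\theta^2$ on the right and the sign flip when dividing by a negative $\theta$. Nothing further is needed.
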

\begin{proof}
Apply Theorem \ref{thmABC} to $\theta A,\theta B, \theta C$.
\end{proof}
We also prove this result:

\begin{thm}\label{mxholder}(Matrix H\"older inequality)
Let $A,B,C,D$ be Hermitian matrices with $A$ and $B$ positive semidefinite, and $0\le p\le 1$, then we have
\begin{equation}\label{holdereq2}
Re\left(\Tr\left(CA^{p}D B^{1-p}+CA^{1-p}D B^p\right)\right)\le  \Tr 
\left(\frac{C^2+D^2}{2}\left(A+B\right)\right).
\end{equation}
\end{thm}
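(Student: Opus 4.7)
The approach is to reduce the matrix inequality to a termwise scalar inequality by diagonalizing the positive semidefinite factors $A$ and $B$ simultaneously in orthonormal eigenbases. Write $A=\sum_i \lambda_i v_i v_i^*$ and $B=\sum_j \mu_j w_j w_j^*$ with $\lambda_i,\mu_j\ge 0$ and $\{v_i\},\{w_j\}$ orthonormal, so that $A^p=\sum_i \lambda_i^p v_i v_i^*$ and analogously for $B^{1-p}$. Substituting into the trace and simplifying via $\Tr(Cv_iv_i^* D w_jw_j^*)=(v_i^*Dw_j)(w_j^*Cv_i)$ identifies
\[
\Tr(CA^p DB^{1-p})=\sum_{i,j}\lambda_i^p\mu_j^{1-p}\,c_{ij},\qquad c_{ij}:=(v_i^*Dw_j)(w_j^*Cv_i),
\]
and adding the $p\leftrightarrow 1-p$ version gives
\[
Re\,\Tr\bigl(CA^pDB^{1-p}+CA^{1-p}DB^p\bigr)=\sum_{i,j}\bigl(\lambda_i^p\mu_j^{1-p}+\lambda_i^{1-p}\mu_j^p\bigr)Re(c_{ij}).
\]

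The next step is to invoke two elementary scalar inequalities. First, the weighted AM--GM (Young) inequality gives $\lambda_i^p\mu_j^{1-p}+\lambda_i^{1-p}\mu_j^p \le \lambda_i+\mu_j$, since each summand is bounded by the appropriate convex combination of $\lambda_i$ and $\mu_j$. Second, AM--GM on the complex scalars $x:=v_i^*Dw_j$ and $y:=w_j^*Cv_i$ gives $Re(c_{ij})=Re(xy)\le |x||y|\le\tfrac{1}{2}(|x|^2+|y|^2)$. Because the weight $\lambda_i^p\mu_j^{1-p}+\lambda_i^{1-p}\mu_j^p$ is nonnegative and the second bound is itself nonnegative, the two inequalities compose in the correct direction to yield
\[
Re\,\Tr\bigl(CA^pDB^{1-p}+CA^{1-p}DB^p\bigr) \le \sum_{i,j}(\lambda_i+\mu_j)\cdot\frac{|v_i^*Dw_j|^2+|w_j^*Cv_i|^2}{2}.
\]

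Finally I would evaluate the four resulting sums using the resolutions of identity $\sum_i v_iv_i^*=\sum_j w_jw_j^*=I$ together with the Hermiticity of $C,D$: for instance, $\sum_{i,j}\lambda_i|w_j^*Cv_i|^2=\sum_i \lambda_i v_i^*C^2v_i=\Tr(C^2 A)$, and the three other sums produce $\Tr(C^2 B)$, $\Tr(D^2 A)$, $\Tr(D^2 B)$, which assemble into the claimed right-hand side $\Tr\bigl(\tfrac{C^2+D^2}{2}(A+B)\bigr)$. The main obstacle I would expect in a more direct attempt is that $Re(c_{ij})$ may be negative; the symmetrization over $p$ and $1-p$ on the left is precisely what produces a nonnegative scalar weight, so that the Young-type inequality $\lambda^p\mu^{1-p}+\lambda^{1-p}\mu^p\le\lambda+\mu$ can be applied in the correct direction without sign loss.
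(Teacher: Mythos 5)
Your proof is correct, and it takes a genuinely different route from the paper. The paper first establishes the integer-exponent case (Lemma \ref{lemmaABC}) by iterating the operator inequality $PQ+Q^*P^*\preceq PP^*+Q^*Q$ together with an extremal argument on the index $l$ maximizing $T_l:=\Tr\left(A^{l}DB^{n-l}D+A^{n-l}DB^{l}D\right)$, then handles rational $p=a/b$ by substituting $A^{1/b}$ and $B^{1/b}$, and finally reaches irrational $p$ by a continuity/limiting argument. You instead expand in the separate eigenbases of $A$ and $B$ and reduce the whole statement to two scalar facts: the Young-type bound $\lambda^{p}\mu^{1-p}+\lambda^{1-p}\mu^{p}\le\lambda+\mu$ and $Re(xy)\le\frac{1}{2}\left(|x|^2+|y|^2\right)$; the composition of the two bounds is legitimate exactly as you argue, since the weight is nonnegative and the second upper bound is itself nonnegative, and the final resummation via the resolutions of identity does give $\Tr\left(\frac{C^2+D^2}{2}(A+B)\right)$. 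Your argument is more elementary and more direct: it covers all $p\in[0,1]$ and semidefinite $A,B$ in one step with no rational approximation, and since Lemma \ref{lemmaABC} is the special case $p=k/n$ applied to $A^n,B^n$, it also yields a shorter proof of the key trace inequality underlying Theorem \ref{thmABC}. Two minor quibbles that do not affect validity: $A$ and $B$ are each diagonalized in their own orthonormal basis rather than ``simultaneously,'' and the symmetrization over $p\leftrightarrow 1-p$ is not what makes the weight nonnegative (the single weight $\lambda_i^{p}\mu_j^{1-p}$ already is); it merely produces the symmetric bound $\lambda_i+\mu_j$ in place of $p\lambda_i+(1-p)\mu_j$.
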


\section{Proof of the bounded differences inequality}
For a random matrix $\m{X}$, the normalized trace mgf is defined, similarly to Definition 3.2 of \cite{Mackey},as
\[m(\theta):=m_{\m{X}}(\theta)=\E \Tra e^{\theta \m{X}}=\frac{1}{d}\E \Tr e^{\theta \m{X}},\]
which may not exists for all values of $\theta$.

We are going to use Proposition 3.3 of \cite{Mackey}
\begin{proposition}\label{mxlapltf}(Matrix Laplace Transform Method)
Let $\m{X}$ be a random matrix with normalized trace mgf $m(\theta):=\E \Tra e^{\theta \m{X}}$. For each $t\in \R$,
\begin{eqnarray}
\label{eqlapl1}&&\PP\{\lambda_{max}(\m{X})\ge t\}\le d\cdot \inf_{\theta>0}\exp\{-\theta t + \log m(\theta)\}, \text{ and }\\
\label{eqlapl2}&&\PP\{\lambda_{min}(\m{X})\le t\}\le d\cdot \inf_{\theta<0}\exp\{-\theta t + \log m(\theta)\}.
\end{eqnarray}
\end{proposition}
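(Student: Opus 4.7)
The plan is to follow the classical Chernoff--Cram\'er strategy, lifting it to the matrix setting by (i) reducing the eigenvalue event to a scalar event via exponentiation, (ii) applying Markov's inequality, and (iii) passing from the extreme eigenvalue of $e^{\theta \m{X}}$ to its trace, exploiting the fact that $e^{\theta \m{X}}$ is positive-definite so its largest eigenvalue is dominated by the sum of all eigenvalues.

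For the upper tail \eqref{eqlapl1}, I would fix $\theta>0$ and use monotonicity of $s\mapsto e^{\theta s}$ to write $\{\lambda_{\max}(\m{X})\ge t\}=\{e^{\theta\lambda_{\max}(\m{X})}\ge e^{\theta t}\}$. Markov's inequality then gives
\[
\PP\{\lambda_{\max}(\m{X})\ge t\}\le e^{-\theta t}\,\E\,e^{\theta\lambda_{\max}(\m{X})}.
\]
By the spectral mapping theorem, if $\m{X}$ has eigenvalues $\lambda_1,\dots,\lambda_d$, then $e^{\theta\m{X}}$ has eigenvalues $e^{\theta\lambda_1},\dots,e^{\theta\lambda_d}$, all strictly positive. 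For $\theta>0$, the largest of these is $e^{\theta\lambda_{\max}(\m{X})}=\lambda_{\max}(e^{\theta\m{X}})$, and positivity yields $\lambda_{\max}(e^{\theta\m{X}})\le\Tr e^{\theta\m{X}}=d\cdot\Tra e^{\theta\m{X}}$. Taking expectations and then the infimum over $\theta>0$ produces the stated bound.

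For the lower tail \eqref{eqlapl2} I would repeat the argument with $\theta<0$. The only twist is that $s\mapsto e^{\theta s}$ is now decreasing, so $\{\lambda_{\min}(\m{X})\le t\}=\{e^{\theta\lambda_{\min}(\m{X})}\ge e^{\theta t}\}$, and correspondingly the largest eigenvalue of $e^{\theta\m{X}}$ is $e^{\theta\lambda_{\min}(\m{X})}$. Applying Markov and then $\lambda_{\max}\le\Tr$ on the positive-definite matrix $e^{\theta\m{X}}$ gives the same chain of inequalities, yielding \eqref{eqlapl2} after minimizing over $\theta<0$.

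There is no deep obstacle here; the proof is essentially bookkeeping. The main point to watch is the sign handling in the $\theta<0$ case, in particular that the extremal eigenvalue of $e^{\theta\m{X}}$ swaps between $\lambda_{\max}$ and $\lambda_{\min}$ of $\m{X}$ depending on $\mathrm{sgn}(\theta)$. Since the trace bound $\lambda_{\max}\le\Tr$ applies uniformly to any positive operator, both cases are handled by the same reasoning, and the existence of $m(\theta)$ for the relevant $\theta$ is absorbed into the infimum (if $m(\theta)=\infty$ the bound is vacuous there).
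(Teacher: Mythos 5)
Your proof is correct and is essentially the standard argument behind this result, which the paper does not reprove but simply imports as Proposition 3.3 of the cited work of Mackey et al.; that proof proceeds by exactly the same Chernoff--Markov step followed by the bound $\lambda_{\max}(e^{\theta \m{X}})\le \Tr e^{\theta \m{X}}$ on the positive-definite matrix $e^{\theta\m{X}}$. Your handling of the sign flip for $\theta<0$ (so that $e^{\theta\lambda_{\min}(\m{X})}$ becomes the top eigenvalue of $e^{\theta\m{X}}$) is the one point requiring care, and you get it right.
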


\begin{proof}[Proof of Theorem \ref{thmboundediff}]
We follow the Markov chain approach \cite{Cth}.

As shown in Chapter 4, an exchangeable pair $(X,X')$ automatically defines a reversible Markov kernel
$P$ as
\[P f(X):= \E (f(X') |X=x),\]
where $f$ is any function with $\E |f(X)|<\infty$. Suppose that $X$ takes values in a Polish space $\Omega$, then 
\begin{lemma}[Lemma 4.1 of  \cite{Cth}] Suppose that $f:\Omega\to \R$ is a measurable function with $\E f(X)=0$, and there is a finite constant $L$ such that
\begin{equation}\label{eqcondpk}
\sum_{k=0}^{\infty} \left|P^k f(x)-P^k f(y)\right|\le L \text{ for every }x\text{ and }y,
\end{equation}
then 
\begin{equation}
F(x,y):=\sum_{k=0}^{\infty}\left(P^k f(x)-P^k f(y)\right)
\end{equation}
satisfies $F(X,X')=-F(X',X)$ and $\E(F(X,X')|X)=f(X)$.
\end{lemma}
With a simple adaptation of the proof, the reader can verify that this Lemma also holds for matrix valued functions $f:\Omega\to \HH^d$, with \eqref{eqcondpk} replaced by 
\begin{equation}\label{eqcondpknorm}
\sum_{k=0}^{\infty} \left|\left|P^k f(x)-P^k f(y)\right|\right|\le L \text{ for every }x\text{ and }y.
\end{equation}

We need to define property \/\Propp\/ as in \cite{Cth}:
\begin{definition*}
Let $\{X(k)\}_{k\ge 0}$ and $\{X'(k)\}_{k\ge 0}$ be two chains from the kernel defined by $(X,X')$, for arbitrary initial values $x,y\in \Omega$. We say that a coupling of these two chains satisfies property \/\Propp\/ if for every $x,y\in \Omega$, and every $k$, the marginal distribution of $X(k)$ only depends on $x$, and the marginal distribution of $X'(k)$ only depends on $y$.
\end{definition*}
We propose the following matrix version Lemma 4.2 of \cite{Cth} (the proof can be easily adapted):
\begin{lemma}
Suppose that a coupling of $\{X(k)\}_{k\ge 0}$ and $\{X'(k)\}_{k\ge 0}$ satisfies property \/\Propp\/. Let
$f:\Omega\to \HH^d$ be a function such that $\E f(X)=0$. Suppose that there exists $L<\infty$ such that for every $x,y\in \Omega$, 
\begin{equation}\label{sumcond}
\sum_{k=0}^{\infty} \left|\left|\E\left(f(X(k))-f(X'(k)) |X(0)=x,X'(0)=y\right)\right|\right|\le L.
\end{equation}
Then, the function $F$ defined as
\begin{equation}
F(x,y):=\sum_{k=0}^{\infty}\E\left(f(X(k))-f(X'(k))|X(0)=x,X'(0)=y\right)
\end{equation}
satisfies $F(X,X')=-F(X',X)$ and $\E(F(X,X')|X)=f(X)$.
\end{lemma}

First, we will prove the independent case:
\begin{proof}[Proof of independent case]
Let $X:=(X_{1},\ldots,X_{n})$ be a vector with independent components
($X_i$: component $i$,  $\{X(k)\}_{k\ge 0}$: Markov chain).

Let
\begin{equation}\label{X1nr}
X^{(r)}:=(X_{1}^{(r)},\ldots,X_{n}^{(r)})
\end{equation}
be independent copies of $X$, for $r\ge 0$. Let $I, I_1, \ldots, I_k\ldots$ be uniformly distributed indexes in $[n]$, independent of each other and of $X$ and $X^{(r)}$. Define $X'$ as
\[X'_{i}=X_{i}\text{ for }  i\ne I \text{ and } X'_{I}=X_{I}^{(0)}.\]
Now we are ready to construct $X(k)$ and $X'(k)$:\\
Suppose that $X(0)=x$ and $X'(0)=y$, for $x,y\in \Omega$. For $k\ge 1$, define $X(k)$ as
\[X_i(k):=X_i(k-1)\text{ for }  i\ne I_k \text{ and } X_{I_k}(k):=X_{I_k}^{(k)}.\]
Similarly, for $k\ge 1$, define $X'(k)$ as
\[X'_i(k):=X'_i(k-1)\text{ for }  i\ne I_k \text{ and } X'_{I_k}(k):=X_{I_k}(k).\]
With this definition, $\{X(k)\}_{k\ge 0}$ and $\{X'(k)\}_{k\ge 0}$ are having the same distribution as the Markov chain defined by the kernel
\/$P f(X)$, moreover $(X(k), X'(k))$ satisfy property \/\Propp. In practice, we will start with $X_0=X$ and $X_0'=X'$.

We can prove condition \eqref{sumcond} by the coupon collector's problem.

For this chain, we can write 
\begin{eqnarray*}
&&m(\theta)'=\E\Tr\left(f(X) e^{\theta f(X)}\right)=\E \Tr\left(F(X,X') \cdot e^{\theta f(X)} \right)\\
&&= \frac{1}{2}\E \Tr\left(\sum_{k=0}^{\infty} \left(f(X(k))-f(X'(k))\right) \cdot \left(e^{\theta f(X)}-e^{\theta f(X')}\right) \right)
\\
&&= \frac{1}{2}\sum_{k=0}^{\infty} \E \Tr\left( \II[I\notin I_1,\ldots,I_k]\left(f(X(k))-f(X'(k))\right) \cdot \left(e^{\theta f(X)}-e^{\theta f(X')}\right) \right)
\end{eqnarray*}
Now, using Theorem \ref{thmABC}, and the fact that $(f(X(k))-f(X'(k)))^2 \preceq A_I^2$ and $(f(X)-f(X'))^2 \preceq A_I^2$, 
\begin{eqnarray*}
m(\theta)'&\le&  \frac{1}{2}\sum_{k=0}^{\infty} \E \Tr\left( \II[I\notin I_1,\ldots,I_k] A_I^2 \left(\frac{e^{\theta f(X)}+e^{\theta f(X')}}{2}\right) \right)\\
&\le& \frac{1}{2}\sum_{k=0}^{\infty} \frac{1}{n} \left(1-\frac{1}{n}\right)^k \left|\left|\sum_{i=1}^n A_i^2\right|\right| \theta m(\theta)\\
&\le&  \frac{1}{2}\sigma^2 \theta m(\theta),
\end{eqnarray*}
so
\[\log(m(\theta))\le \frac{1}{4}\theta^2 \sigma^2,\]
thus by Proposition \ref{mxlapltf},
\[\PP\{\lambda_{max}(\m{H}(Z))\ge t\}\le d\cdot \inf_{\theta>0}\exp\left\{-\theta t + \frac{1}{4}\theta^2 \sigma^2\right\}\le d\cdot\exp\left(-\frac{t^2}{\sigma^2}\right).\]
\end{proof}
Now we prove the general case:
\begin{proof}[Proof for Dobrushin condition]
Let $X,X',X(k),X'(k)$ be defined analogously to the way it is done in the proof of Theorem 4.3 of \cite{Cth}: $X'$ is defined by choosing $I$ uniformly in $[n]$, and then resampling $X_I$ conditioned on the rest (Gibbs sampler), while $X(k), X'(k)$ are defined by choosing $I_k$ uniformly in $[n]$, resampling $X_{I_k}(k-1)$ and $X'_{I_k}(k-1)$ in the greedy coupling way, i.e. $X_{I_k}(k)$ is resampled conditionally on the rest of $X(k-1)$,  $X'_{I_k}(k)$ is resampled conditionally on the rest of $X'(k-1)$, and at the same time, these two conditional distributions are coupled in the maximal coupling (see \cite{Lindvall}).
Property \/\Propp\/ can be proven by induction, verifying \eqref{sumcond} is left to the reader as exercise.

We can write $f(X(k))-f(X'(k))$ as a telescopic sum:
\begin{eqnarray*}
&&f(X(k))-f(X'(k))= \sum_{i=1}^{n}f\left(X_1(k),\ldots,X_i(k),X'_{i+1}(k),\ldots,X'_{n}(k)\right) \\
&&- f\left(X_1(k),\ldots,X_{i-1}(k),X'_{i}(k),\ldots,X'_{n}(k)\right)=: \sum_{i=1}^{n} Z_i(k),
\end{eqnarray*}
We have
\begin{eqnarray*}
m'(\theta)&=&\E \Tr\left(f(X)e^{\theta f(X)}\right)=\E \Tr\left(F(X,X')e^{\theta f(X)}\right)\\
&=&\frac{1}{2}\E \Tr\left(F(X,X')\left(e^{\theta f(X)}-e^{\theta f(X')}\right)\right)\\
&=&\frac{1}{2}\sum_{k=0}^{\infty}\E\Tr\left(\left(f(X(k))-f(X'(k))\right)\left(e^{\theta f(X)}-e^{\theta f(X')}\right)\right)\\
&=&\frac{1}{2}\sum_{k=0}^{\infty}\sum_{i=1}^{n} \E\Tr\left( Z_i(k)  \left(e^{\theta f(X)}-e^{\theta f(X')}\right)\right),\\
\end{eqnarray*}
and obviously $Z_i(k)=L_i(k) Z_i(k)$, so by Theorem \ref{thmABC}, we have
\begin{eqnarray*}
&&\frac{1}{2}\E \Tr\left(L_i(k) Z_i(k) \cdot \left(e^{\theta f(X)}-e^{\theta f(X')}\right)\right)\\
&&\le \frac{1}{2}\E \Tr \left(L_i(k) \theta \frac{1}{4}\left((Z_i(k))^2+ (f(X)-f(X'))^2\right)\cdot \left(e^{\theta f(X)}+e^{\theta f(X')}\right)\right)\\
&&\le \frac{1}{8}\E \Tr \left(L_i(k) \theta\left(A_I^2+A_i^2\right)\cdot \left(e^{\theta f(X)}+e^{\theta f(X')}\right)\right)\\
&&\le \frac{1}{4}\E \Tr \left(l_i(k) \theta\left(A_I^2+A_i^2\right)\cdot e^{\theta f(X)}\right).
\end{eqnarray*}

Let $D$ be the Dobrushin dependence matrix of $X_1,\ldots,X_n$, let us denote $B:=\left(1-\frac{1}{n}\right)E+\frac{1}{n}D$, with $E$ being the $n\times n$ identity matrix. Let $L_i(k):=\II[X_i(k)\ne X'_i(k)]$, and let $l_i(k):=\E(L_i(k)|X,X')$. Page 77-78 of \cite{Cth} proves that $l(k)\le B^k e(I)$, with $e(I)$ denoting the vector whose $I$th coordinate is $1$ and the rest is 0.

\begin{eqnarray*}
&&\sum_{i=1}^n \frac{1}{4}\E \Tr \left(l_i(k) \left(A_I^2+A_i^2\right)\cdot \theta e^{\theta f(X)}\right)\\
&&\le \sum_{i=1}^n \frac{1}{4}\E \Tr \left([B^k e(I)]_i \left(A_I^2+A_i^2\right)\cdot \theta e^{\theta f(X)}\right)\\
&&=\sum_{i=1}^n\sum_{j=1}^n \frac{1}{n}\frac{1}{4}\E \Tr \left([B^k e(j)]_i \left(A_j^2+A_i^2\right)\cdot \theta e^{\theta f(X)}\right)\\
&&=\sum_{j=1}^n \frac{1}{n}\frac{1}{4}\E \Tr \left(\sum_{i=1}^n [B^k e(j)]_i  A_j^2\cdot \theta e^{\theta f(X)}\right)\\
&&+\sum_{i=1}^n \frac{1}{n}\frac{1}{4}\E \Tr \left(\sum_{j=1}^n [B^k e(j)]_i  A_i^2\cdot \theta e^{\theta f(X)}\right)
\end{eqnarray*}
Now \begin{eqnarray*}
&&\sum_{i=1}^n [B^k e(j)]_i= ||B^k e(j)||_1\le \left(||B||_1\right)^k,\text{ and}\\
&&\sum_{j=1}^n [B^k e(j)]_i \le \left(||B||_{\infty}\right)^k,
\end{eqnarray*}
so
\begin{eqnarray*}
\frac{1}{2}\E \Tr\left(L_i(k) Z_i(k) \cdot \left(e^{\theta f(X)}-e^{\theta f(X')}\right)\right)\le \frac{1}{4n}
\sigma^2 \theta m(\theta) \left(\left(||B||_1\right)^k+ \left(||B||_{\infty}\right)^k\right).
\end{eqnarray*}
Summing up in $k$, and noticing that $||B||_1\le 1-\frac{1}{n}+\frac{1}{n}||D||_1$ gives
\[m'(\theta)\le \frac{\theta}{4}\sigma^2 m(\theta)\left(\frac{1}{1-||D||_1}+\frac{1}{1-||D||_{\infty}}\right),\]
and thus we get the result by Proposition \ref{mxlapltf}, as in the independent case.
\end{proof}

\end{proof}

\section{Proof of trace inequalities}

Before starting the proof, we state a few simple inequalities:
\begin{itemize}
\item For any $P,Q\in \MM^d$, we have
\begin{equation}\label{sqrm}
PQ+Q^*P^*\preceq PP^*+Q^*Q,
\end{equation}
which follows from $(P+Q^*)(P^*+Q)\succeq 0$.
\item Also, we can easily prove that if $P,Q,R,S \in \HH^d$, then
\begin{equation}\label{sqrm4}
Re(\Tr\left(PQRS\right))\le \Tr\left(\frac{(P^2+R^2)(Q^2+S^2)}{4}\right)
\end{equation}
To prove this, just apply \eqref{sqrm} to $(PQ)(RS)$ and to $(QR)(SP)$, and rearrange the terms.
\end{itemize}

\begin{proof}[Proof of Theorem \ref{thmABC}]
First notice that adding a constant times identity matrix to $A$ and $B$ multiplies both sides by the same number. Therefore we can suppose without loss of generality that $A,B\succeq 0$.

By taking Taylor expansion, the inequality becomes
\begin{eqnarray*}
\Tr\left(C\sum_{k=1}^{\infty}\frac{A^k-B^k}{k!}\right)\le \Tr\left(\frac{C^2+(A-B)^2}{4}\sum_{k=1}^{\infty}\frac{A^{k-1}+B^{k-1}}{(k-1)!}\right)
\end{eqnarray*} 
To show this, we will prove that the inequality holds for each term in the sums, i.e. we claim:
\begin{equation}\label{CAkBk}
\Tr\left(C(A^k-B^k)\right)\le k \cdot \Tr \left(\frac{C^2+(A-B)^2}{4}\left(A^{k-1}+B^{k-1}\right)\right)
\end{equation}

Now
\[A^k-B^k=A^k-A^{k-1}B+A^{k-1}B-A^{k-2}B^2+\ldots+AB^{k-1}-B^k.\]

The terms in the sum are of the form $A^lB^{k-l}-A^{l-1}B^{k-l+1}=A^{l-1}(A-B)B^{k-l}$.
We claim the following about two 'symmetric' terms from such a sum (which clearly implies \eqref{CAkBk}):
\begin{lemma}\label{lemmaABC}
If $A,B,C$ are Hermitian matrices with $A,B$ positive definite, and $0\le k\le n$ are integers, then
\begin{eqnarray}\label{eqABCk}
&&Re\left(\Tr\left(C\left(A^{k}(A-B)B^{n-k}+A^{n-k}(A-B)B^{k}\right)\right)\right)\\
\nonumber&&\le  \Tr \left(\frac{C^2+(A-B)^2}{2}\left(A^{n}+B^{n}\right)\right).
\end{eqnarray}
\end{lemma}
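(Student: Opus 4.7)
The plan is to argue by induction on $n$, relying on the elementary inequality~\eqref{sqrm}.

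The base case $n=0$ reduces (with $k=0$) to $2\,\mathrm{Re}\,\Tr(C(A-B))\le\Tr(C^2+(A-B)^2)$, which follows directly from~\eqref{sqrm} with $P=C$ and $Q=A-B$ (both Hermitian, so $P^*=P$, $Q^*=Q$).

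For the boundary cases $k=0$ or $k=n$ at arbitrary $n\ge 1$, I would use the operator inequality $C(A-B)+(A-B)C\preceq C^2+(A-B)^2$ obtained from~\eqref{sqrm}, multiply by the positive semidefinite matrices $A^n$ and $B^n$ inside the trace (using the monotonicity $\Tr(MR)\le\Tr(NR)$ for $M\preceq N$, $R\succeq 0$), and sum. Cyclicity of the trace then converts the resulting bound into the required form $\Tr((C^2+(A-B)^2)(A^n+B^n))/2$.

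For the interior cases $1\le k\le n-1$ and general $n$, I would apply~\eqref{sqrm} via a block-matrix decomposition.  Specifically, let $P$ be the $d\times 2d$ block row $[CA^{k/2},\,CA^{(n-k)/2}]$ and let $Q$ be the $2d\times d$ block column with blocks $A^{k/2}(A-B)B^{n-k}$ and $A^{(n-k)/2}(A-B)B^k$ (both well-defined since $A,B\succeq 0$).  Then $PQ=CA^k(A-B)B^{n-k}+CA^{n-k}(A-B)B^k$, so $2\,\mathrm{Re}\,\Tr(PQ)$ equals twice the LHS of the lemma, and the estimate $2\,\mathrm{Re}\,\Tr(PQ)\le\Tr(PP^*+Q^*Q)$ yields a bound in terms of traces against intermediate powers of $A$ and $B$, namely a combination of $\Tr(C^2(A^k+A^{n-k}))$ with terms of the form $\Tr((A-B)A^j(A-B)B^{2(n-j)})$.

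The main obstacle is to convert these intermediate-power traces into the cleaner target $\Tr((C^2+(A-B)^2)(A^n+B^n))/2$.  To overcome this, I would combine the $A$-centered block estimate above with a symmetric $B$-centered estimate (obtained by swapping the roles of $A$ and $B$ in the block decomposition) and interpolate between them using a Young-type scalar inequality $a^kb^{n-k}+a^{n-k}b^k\le a^n+b^n$, applied through the simultaneous spectral decomposition of the commuting factors appearing in the bound. This matching step between intermediate and final powers is where essentially all the technical difficulty lies, and it is the ingredient that makes the statement tight.
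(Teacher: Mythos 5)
Your base case and the boundary cases $k\in\{0,n\}$ are fine, but the interior case contains a genuine gap, and it sits exactly where the real content of the lemma lies. Your block decomposition $P=[CA^{k/2},\,CA^{(n-k)/2}]$ produces $\Tr(PP^*)=\Tr(C^2A^k)+\Tr(C^2A^{n-k})$, i.e.\ the matrix $C^2$ ends up paired with the \emph{intermediate} powers $A^k$ and $A^{n-k}$ rather than with $A^n$ or $B^n$; likewise $\Tr(Q^*Q)$ contains $\Tr(DA^kDB^{2(n-k)})$, whose total degree in $(A,B)$ is $2n-k+2$ rather than the target's $n+2$. These intermediate bounds are \emph{not} dominated by $\Tr\bigl(\tfrac{C^2+D^2}{2}(A^n+B^n)\bigr)$: rescaling $A\to tA$, $B\to tB$ and letting $t\to 0$ (with $1\le k\le n-1$) makes $\Tr(C^2A^k)$ of order $t^k$ while $\Tr(C^2(A^n+B^n))$ is of order $t^n$, so the second inequality in your chain fails, even after averaging with the $B$-centered version. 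The proposed repair via a Young inequality ``applied through the simultaneous spectral decomposition of the commuting factors'' does not exist as stated, because $A$, $B$ and $A-B$ do not commute; the passage from traces of words like $A^jDB^{n-j}D$ to $\Tr(D^2(A^n+B^n))$ is precisely the hard, non-commutative step, and your plan defers it to a tool that is unavailable.

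The paper avoids both problems by choosing the split differently. Writing $D=A-B$, it groups the factors as $(B^{n/2}C)(A^{k}DB^{n/2-k})+(CA^{n/2})(A^{n/2-k}DB^{k})$, so that the application of \eqref{sqrm} yields $C$-terms that are \emph{exactly} $\tfrac12\Tr(C^2B^n)+\tfrac12\Tr(C^2A^n)$ (correct powers, no interpolation needed), together with degree-$(n+2)$ remainder terms $\Tr(A^{2k}DB^{n-2k}D)+\Tr(A^{n-2k}DB^{2k}D)$. It then controls the quantities $T_l:=\Tr(A^lDB^{n-l}D+A^{n-l}DB^lD)$ by a self-improving argument: applying the same regrouping trick at a maximizing index $l_0<n/2$ gives $T_{l_0}\le\tfrac12 T_{2l_0}+\tfrac12 R$ with $R:=\Tr(D^2(A^n+B^n))$, whence $T\le\tfrac12 T+\tfrac12 R$ and $T\le R$. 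If you want to salvage your outline, replace your block $P$ by one carrying the full half-powers $A^{n/2}$ or $B^{n/2}$ next to $C$, and replace the commuting-spectral-decomposition step by an argument of this maximum/doubling type.
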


\begin{proof}
Let us denote $D:=A-B$, then the inequality becomes
\begin{equation}\label{holdereq1}
Re\left(\Tr\left(C\left(A^{k}D B^{n-k}+A^{n-k}D B^{k}\right)\right)\right)\le  \Tr 
\left(\frac{C^2+D^2}{2}\left(A^{n}+B^{n}\right)\right)
\end{equation}
If $k=n/2$, then this follows from \eqref{sqrm4}. Suppose, without loss of generality, that $k<n/2$. Now we can get rid of $C$ in the following way: 
\begin{eqnarray*}
&&Re\left(\Tr\left(CA^{k}D B^{n-k}+CA^{n-k}D B^{k}\right)\right)=\\
&&Re\left(\Tr\left((B^{n/2}C)(A^{k}D B^{n/2-k})+(CA^{n/2}) (A^{n/2-k} D B^{k})\right)\right)\le \\
&&\le \frac{1}{2} Re\left(\Tr\left(C^2 B^{n} + A^{2k} D B^{n-2k} D + C^2 A^{n} + A^{n-2k} D B^{2k}D \right)\right)
\end{eqnarray*}
The terms involving $C$ are the same as on the right hand side of \eqref{eqABCk}, so we need to prove that 
\begin{equation}
\Tr\left(A^{2k} D B^{n-2k} D + A^{n-2k} D B^{2k}D \right)
\le \Tr \left(D^2\left(A^{n}+B^{n}\right)\right).
\end{equation}
Both sides are real so we did not write the real part.

Let us denote, for $0\le l\le n$, 
\begin{eqnarray*}
R:=\Tr \left(D^2\left(A^{n}+B^{n}\right)\right),\text{ and }\\
T_l:=\Tr\left(A^{l} D B^{n-l} D + A^{n-l} D B^{l}D \right).
\end{eqnarray*}
We can show that when $l=n/2$, then $T_l\le R$ holds. Let us denote the maximum of $T_l$ as $T:=\max_{1\le l \le n}T_l$, and suppose that the maximum is taken at $l_0< n/2$, then
\begin{eqnarray*}
&&\Tr\left(A^{l_0} D B^{n-l_0} D + A^{n-l_0} D B^{l_0}D \right)\\
&&=\Tr\left((A^{l_0} D B^{n/2-l_0})(B^{n/2}D)+
(A^{n/2} D) (B^{l_0}D A^{n/2-l_0})\right),
\end{eqnarray*}
so as previously, using \eqref{sqrm} we can show that $T_{l_0}\le \frac{1}{2} T_{2l_0}+\frac{1}{2}R$, i.e. $T\le \frac{1}{2}T+\frac{1}{2}R$, and thus $T\le R$.

For $l_0>n/2$, we can show that $T_{l_0}\le \frac{1}{2} T_{2n-2{l_0}}+\frac{1}{2}R$.
\end{proof}
\end{proof}

\begin{proof}[Proof of Theorem \ref{mxholder}]
\eqref{holdereq2} is a generalization of \eqref{holdereq1}. If $p$ is rational of the form $\frac{a}{b}$, then we can proceed the same way as in the proof of Lemma \ref{lemmaABC}, with $A^{\frac{1}{b}}$ and $B^{\frac{1}{b}}$ taking the role of $A$ and $B$. If $p$ is irrational, we can write it as the limit of rationals, and use continuity to get the result.
\end{proof}

\section{Open problems}
Based on extensive numerical evidence and on some theoretical results, we conjecture the following trace inequalities:

\begin{enumerate}
\item Let $A,B,C\in \HH^d$, then
\begin{eqnarray}\label{expconj}
&&\Tr\left(C(e^A-e^B)\right)\\
\nonumber&&\le \Tr\left(\frac{C_+^2+(A-B)_+^2}{2}e^{A}+\frac{C_-^2+(A-B)_-^2}{2}e^{B}\right)
\end{eqnarray}
\item We expect \eqref{expconj} to generalize to any monotone increasing convex function $f$, i.e. we expect that in such situations,
\begin{eqnarray}\label{fconj}
&&\Tr\left(C(f(A)-f(B))\right)\\
\nonumber&&\le \Tr\left(\frac{C_+^2+(A-B)_+^2}{2}f'(A)+\frac{C_-^2+(A-B)_-^2}{2}f'(B)\right)
\end{eqnarray}
\item \eqref{expconj} would imply concentration for self-bounding matrix valued functions (in the sense of \cite{LugosiSelfBounding}). A similar setting has been already studied in \cite{Mackeythesis}, Theorem 25, however, this theorem requires a very strong self-reciprocity condition, which may not be satisfied in general.

We define matrix self-bounding functions as follows: 
\begin{definition}
An $\HH^d$ valued function $\m{H}(Z_1,\ldots,Z_n)$ is said to be $(a,b)$ matrix self-bounding, if for any $Z_1',\ldots,Z_n'$,
\begin{enumerate}
\item $\m{H}(Z)-\m{H}(Z_1,\ldots,Z_i',\ldots,Z_n)\le \m{I}_d$, and
\item $\sum_{i=1}^n(\m{H}(Z)-\m{H}(Z_1,\ldots,Z_i',\ldots,Z_n))_+\preceq a \m{H}(Z)+b \m{I}_d$.
\end{enumerate}
An $\HH^d$ valued function $\m{H}(Z_1,\ldots,Z_n)$ is said to be weakly $(a,b)$ matrix self-bounding, if for any $Z_1',\ldots,Z_n'$, 
\[\sum_{i=1}^n(\m{H}(Z)-\m{H}(Z_1,\ldots,Z_i',\ldots,Z_n))_+^2\preceq a \m{H}(Z)+ b \m{I}_d.\]
\end{definition}
\end{enumerate}

We expect concentration inequalities similar to Theorem 1 of \cite{LugosiSelfBounding} to hold for such functions.

\section*{Acknowledgements}
The author thanks Doma Sz\'{a}sz and Mogyi T\'{o}th for infecting him with their enthusiasm of probability.
He thanks his thesis supervisors, Louis Chen and Adrian R\"{o}llin, for the opportunity to study in Singapore, and their  useful advices. Finally, many thanks to my brother, Roland Paulin, for the enlightening discussions.
\bibliographystyle{imsart-nameyear}
\bibliography{References}

\end{document}